\newtheorem{theorem}{Theorem}    
\newtheorem{proposition}{Proposition} 
\newtheorem{question}{Question} 
\theoremstyle{definition}
\newtheorem{example}[theorem]{Example}
\newtheorem{remark}[theorem]{Remark}
\newtheorem*{remark*}{Remark}
\newcommand{\Z}{\mathbb{Z}}
\newcommand{\C}{\mathbb{C}}
\newcommand{\strutd}{ \,\raisebox{-3mm}{\includegraphics*[width=8mm]{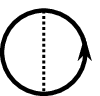}} }
\newcommand{\Lift}{\textrm{Lift}_{(p,q)}^{1-col}}
\title[Mutation invariance of the zeroth coefficient polynomial]{Mutation invariance for the zeroth coefficients of colored HOMFLY polynomial}
\author[T.Ito]{Tetsuya Ito}
\address{Department of Mathematics, Kyoto University, Kyoto 606-8502, JAPAN}
\email{tetitoh@math.kyoto-u.ac.jp}
\subjclass[2010]{Primary~57M25, Secondary~57M27}
\keywords{Mutation, HOMFLY polynomial}
\begin{document}

\begin{abstract}
We show that the zeroth coefficient of the cables of the HOMFLY polynomial (colored HOMFLY polynomials) does not distinguish mutants. This makes a sharp contrast with the total HOMFLY polynomial whose 3-cables can distinguish mutants.
\end{abstract}

\maketitle

\section{Introduction}

Let $P_{K}(a,z)$ be the HOMFLY polynomial of an oriented knot or link $K$ in $S^{3}$ defined by the skein relation
\[ a^{-1}P_{\raisebox{-2mm}{\includegraphics*[width=3.5mm]{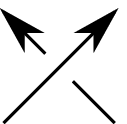}}}(a,z)+ a P_{\raisebox{-2mm}{\includegraphics*[width=3.5mm]{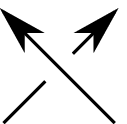}}}(a,z)= z P_{\raisebox{-2mm}{\includegraphics*[width=3.5mm]{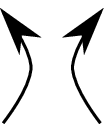}}}(a,z), \quad P_{\sf Unknot}(a,z) =1. \]
It is known that $P_{K}(a,z)$ is written in the form
\[ P_{K}(a,z)= (az)^{-r_K+1} \sum_{i\geq 0} \gamma_K^{i}(a)z^{2i}. \]
Here $r_K$ denotes the number of the components of $K$.
We call the polynomial $\gamma_K^{i}(a)$ the \emph{$i$-th coefficient (HOMFLY) polynomial} of $K$. 

One of an advantage of using the coefficient polynomials is that their skein formulae are simpler \cite{ka}. For example, the zeroth coefficient polynomial $\gamma^{0}_{K}(a)$ satisfies the skein formula
\[ a^{-2} \gamma^0_{\raisebox{-2mm}{\includegraphics*[width=3.5mm]{crossp.eps}}}(a)+ \gamma^{0}_{\raisebox{-2mm}{\includegraphics*[width=3.5mm]{crossn.eps}}}(a,z)= \begin{cases} \gamma^0_{\raisebox{-2mm}{\includegraphics*[width=3.5mm]{cross0.eps}}}(a) & \mbox{ if } \delta=0, \\
0 & \mbox{ otherwise}. 
\end{cases} \] 
Here $\delta=\frac{1}{2}(r_{ \raisebox{-2mm}{\includegraphics*[width=3.5mm]{crossp.eps}}} - r_{ \raisebox{-2mm}{\includegraphics*[width=3.5mm]{cross0.eps}}}+ 1) \in \{0,1\}$.
Consequently, the $i$-th coefficient polynomial $\gamma^{i}_K(a)$ for fixed $i$ is computed in polynomial time with respect to the number of crossings \cite{pr}. This makes a sharp contrast with the total HOMFLY polynomial whose computation is NP-hard \cite{j}.

Two knots $K$ and $K'$ are called \emph{mutant} if $K'$ is obtained from $K$ by \emph{mutation}. We take a ball $B$ whose boundary intersects with $K$ at four points to get a tangle $Q$. Then we cut $K$ along $Q$ and glue it back by rotating $Q$ (see Figure \ref{fig:mutation} (i)) to get another knot $K'$. If necessary, we change the orientation of strands in $Q$ so that the $K'$ admits a coherent orientation. 
\begin{figure}[htbp]
\begin{center}
\includegraphics*[width=90mm]{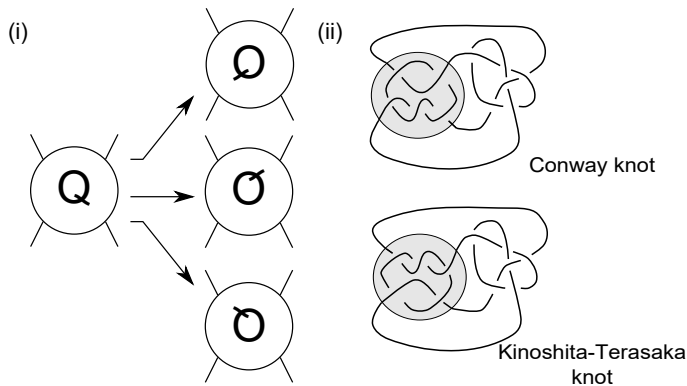}
   \caption{(i) Mutation (ii) Example of mutation: the Conway knot and the Kinoshita-Terasaka knot}
 \label{fig:mutation}
\end{center}
\end{figure}
It is often difficult to distinguish mutants since many familiar knot invariants, including the HOMFLY polynomial, take the same value for mutants.

For a coprime integers $p$ and $q$, we denote by $K_{p,q}$ the $(p,q)$-cable of the knot $K$. It may happen that the cables of a knot invariant $v$ can distinguish mutants, even if $v$ itself cannot. Namely, for mutants $K$ and $K'$, it may happen that $v(K)=v(K')$ but $v(K_{p,q}) \neq v(K'_{p,q})$. 

For example, the HOMFLY polynomial itself does not distinguish mutants but its 3-cable does; the HOMFLY polynomial of the 3-cable of the Conway knot and the Kinoshita-Terasaka knot (see Figure \ref{fig:mutation}(ii)), which are fundamental example of mutants, are different \cite{mc}.

Of course it may also happen that the cables of the invariant still fail to distinguish mutants -- for example, the cables of the Jones polynomial, the colored Jones polynomials, cannot distinguish mutants.

As shown in \cite{ta0}, the cables of the zeroth coefficient polynomial are not determined by the HOMFLY polynomial so it is interesting to ask the cables of zeroth coefficient polynomial can distinguish mutants or not. 

In this paper we show that the cables of the zeroth coefficient polynomial, the zeroth coefficient of the cables of the HOMFLY polynomial (so called the colored HOMFLY polynomial), do not distinguish mutants.

\begin{theorem}
\label{theorem:main}
If two knots $K$ and $K'$ are mutant, then $\gamma^0_{K_{p,q}}(a)=\gamma^{0}_{K'_{p,q}}(a)$. 
\end{theorem}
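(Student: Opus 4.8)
The plan is to reduce Theorem~\ref{theorem:main} to a statement about rotating a large tangle by $\pi$, and then to exploit the fact that the skein relation of \cite{ka} for $\gamma^0$ is ``almost local'': it permits the removal of any crossing at the cost of a scalar $a^{\pm 2}$ together with lower-crossing diagrams. Realize the mutation inside a ball $B$ meeting $K$ in four points, writing $K=Q\cup N$ with $Q=K\cap B$ a four-ended tangle and $N$ the complementary tangle, so that $K'=\overline{Q}\cup N$ where $\overline{Q}$ is $Q$ rotated by $\pi$ about a coordinate axis (reversing orientations of the strands of $Q$ if needed, as in the Introduction). First I would check that $(p,q)$-cabling intertwines with mutation: after isotoping the $q$ framing twists out of $B$, one has $K_{p,q}=Q^{(p)}\cup N^{(p)}$, where $Q^{(p)}$ is the $p$-parallel of $Q$, a tangle with $4p$ endpoints in $B$, and $N^{(p)}$ is the $p$-parallel of $N$ carrying the twists; the very same $\pi$-rotation of $B$ sends $Q^{(p)}$ to $\overline{Q^{(p)}}$, whence $K'_{p,q}=\overline{Q^{(p)}}\cup N^{(p)}$. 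So it is enough to show that $\gamma^0$ is unchanged when a sub-tangle with $4p$ endpoints is replaced by its $\pi$-rotation.

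For $p=1$ this is immediate, and is essentially why the HOMFLY polynomial itself is a mutation invariant: the relevant $\pi$-rotation fixes each of the three crossingless matchings of the four points, so once the $\gamma^0$-skein relation is used to express $Q$ as a combination of these matchings the two closures agree term by term. The real content is $p\ge 2$, where the cabled rotation genuinely permutes strands among the $4p$ endpoints; this is exactly the point at which the corresponding argument for the \emph{total} HOMFLY polynomial fails, as the $3$-cable of the Conway and Kinoshita--Terasaka knots shows. For general $p$ the plan is to run the $\gamma^0$-skein relation as an induction on the number of crossings of the tangle: each crossing is replaced by lower-crossing tangles with coefficients $a^{\pm 2}$ and, when the two strands lie in the same component of the ambient link, an additional smoothed term; the $\pi$-rotation carries this crossing to the corresponding crossing of the rotated tangle, where the same replacement is available. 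Iterating, both $\gamma^0(Q^{(p)}\cup N^{(p)})$ and $\gamma^0(\overline{Q^{(p)}}\cup N^{(p)})$ get expressed as the \emph{same} linear combination of numbers $\gamma^0(\mu\cup N^{(p)})$ with $\mu$ ranging over crossingless matchings of the $4p$ points, leaving one to compare these for $\mu$ and its $\pi$-rotation $\overline{\mu}$.

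The step I expect to be the main obstacle is precisely the non-local feature of this skein relation: whether a crossing contributes its smoothed term depends on whether the two strands belong to the same component of the \emph{closed} link, which is data not visible in the tangle alone, so one cannot simply invoke, as for the Kauffman bracket, a skein module spanned by crossingless matchings on which the rotation acts by a permutation. The way around this should be to use that the closure $N^{(p)}$ is untouched by the mutation: the $\pi$-rotation of the $4p$ endpoints is compatible with the partition of those endpoints induced by $N^{(p)}$, so the same ``same component / different component'' choices are forced at corresponding crossings in the reductions of $Q^{(p)}$ and of $\overline{Q^{(p)}}$, and the rotation carries each crossingless matching $\mu$ to one for which $\mu\cup N^{(p)}$ and $\overline{\mu}\cup N^{(p)}$ have the same component structure and the same value of $\gamma^0$. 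Making this bookkeeping precise, together with the routine but necessary verification that cabling intertwines with mutation and that orientation reversals in $Q$ cause no trouble, is where the work lies.
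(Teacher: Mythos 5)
Your outline takes a genuinely different, skein-theoretic route from the paper, which works instead with the Kontsevich invariant: there the point is that $\gamma^0$ only sees the genus-zero (equivalently $1$-colorable) chord diagrams (Proposition \ref{prop:weightformula}), the cabling formula (\ref{eqn:cabling}) converts the problem into counting $1$-colorable lifts, and the whole theorem reduces to an explicit bijection $\Lift(D)\cong\Lift(D^{\tau})$ (Proposition \ref{proposition:key}). Your route is essentially that of Takioka \cite{ta}, and as it stands it has a genuine gap at the final step. After you have expanded $Q^{(p)}$ and $\overline{Q^{(p)}}$ into crossingless matchings of the $4p$ boundary points, you still must show $\gamma^0(\mu\cup N^{(p)})=\gamma^0(\overline{\mu}\cup N^{(p)})$ for each matching $\mu$. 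For $p\geq 2$ the $\pi$-rotation genuinely permutes the non-crossing matchings (e.g.\ already for six points the matching $(1,2)(3,4)(5,6)$ is sent to $(2,3)(4,5)(6,1)$), so $\mu\cup N^{(p)}$ and $\overline{\mu}\cup N^{(p)}$ are in general non-isotopic links, and the assertion that they ``have the same value of $\gamma^0$'' is exactly an instance of the statement you are trying to prove --- invariance of $\gamma^0$ under a $4p$-strand generalized mutation --- specialized to crossingless tangles. Nothing in the proposal supplies an independent argument for it; you have acknowledged the non-locality obstacle but the proposed workaround only addresses the bookkeeping of the same-component/different-component data during the resolution, not this terminal comparison.

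A secondary, fixable but nontrivial, issue is the claim that both sides reduce to ``the \emph{same} linear combination'': the $\gamma^0$ skein relation is not a resolution algorithm in a free skein module (when $\delta=1$ it is a crossing-change relation with no smoothing term, and one must also handle split unknotted components and confluence of different resolution orders), so identifying the two expansions coefficient-by-coefficient requires an argument, not just the observation that the rotation preserves crossing signs and component data. These are precisely the points where the skein-theoretic method has so far been pushed through only for $p\leq 3$ in \cite{ta}; the paper's chord-diagram argument is what makes arbitrary $p$ accessible. To repair your approach you would need either to prove the crossingless-matching comparison directly (which I do not see how to do without new input) or to replace the crossingless-matching basis by an invariant of the tangle $Q^{(p)}$ in a suitable quotient skein module on which the $\pi$-rotation provably acts trivially.
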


The mutation invariance for the cables of the zeroth coefficient polynomial was studied \cite{ta}, where it was shown that 3-cables of the zeroth coefficient polynomial do not distinguish mutants by skein theoretic argument. Our proof is based on a theory of Kontsevich invariants.

\section*{Acknowledgements}

This work was supported by JSPS KAKENHI Grant Number 15K17540, 16H02145. He thanks to H. Takioka for helpful conversations and anonymous referee for pointing out an error in the earlier version of the paper.

\section{The zeroth coefficient polynomial in the context of the Kontsevich invariant}

In this section we discuss the zeroth coefficient polynomial in a point of view of  quantum invariants. For basics of quantum invariants, we refer to \cite{oh}. We also refer to \cite{lm} for the relation between the HOMFLY polynomial and the Kontsevich invariant.

\subsection{Chord diagram and intersection graph}

A \emph{chord} over the circle $S^{1}$ is a pair of distinct points $\{x,y\} \subset S^{1}$. We call the points $x$ and $y$ the \emph{legs} of the chord. 
A \emph{chord diagram} is a correction of mutually distinct $n$ chords $\{\{x_i,y_i\}\}_{i=1,\ldots, n}$ over $S^{1}$. The \emph{degree} of the chord diagram $D$ is the number of chords, and we denote by $[D]$ the set of chords of $D$. As usual, we regard homeomorphic chord diagram as the same and express a chord diagram by drawing a diagram consisting of $S^{1}$ and chords connecting their legs.

The \emph{space of chord diagram} $\mathcal{C}=\mathcal{C}(S^{1})$ is a graded $\C$-vector space generated by chord diagrams, modulo 4T (four-term) relation in Figure \ref{fig:4T}. By taking the connected sum $\#$ as a multiplication, $\mathcal{C}$ is a graded commutative algebra. In the following, we will actually use the completion of $\mathcal{C}$ with respect to degrees, which we denote by the same symbol $\mathcal{C}$ by abuse of notation.

\begin{figure}[htbp]
\begin{center}
\includegraphics*[width=80mm]{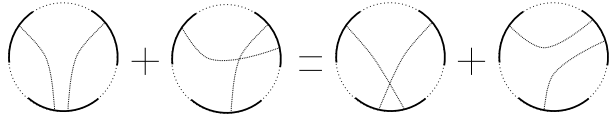}
   \caption{4T relation}
 \label{fig:4T}
\end{center}
\end{figure}

The \emph{intersection graph} of a chord diagram $D$ is a graph $\Gamma(D)$ whose vertices is $[D]$, the set of chords of $D$. Two vertices $v$ and $w$ are connected by an edge if and only if two chords $v$ and $w$ intersects. Here we say that chords $v=\{a,b\}$ and $w=\{c,d\}$ \emph{intersect} if the points $a$ and $b$ belong to the different components of $S^{1} \setminus \{c,d\}$.

For a non-negative integer $n$, we say that $D$ is \emph{$n$-colorable} if its intersection graph $\Gamma(D)$ is $n$-colorable. Namely, there is a map called a \emph{(vertex) coloring} $c: \{\mbox{vertices of }\Gamma(D) \} =[D] \rightarrow \{1,\ldots,n\}$ such that $c(v) \neq c(w)$ if $v$ and $w$ are connected by an edge.

\subsection{Information carried by the coefficient polynomials}

In the following, we will always regard a knot $K$ as a framed knot with 0-framing.

Let $V_N$ be the standard $N$-dimensional representation of the lie algebra $\mathfrak{sl}_N$. Let $Q^{(\mathfrak{sl}_N,V_N)}_K(q)$ be the quantum $(\mathfrak{sl}_N,V_N)$ invariant of a knot $K$, which is related to the HOMFLY polynomial by
\begin{equation}
\label{eqn:fund00}
Q^{(\mathfrak{sl}_N,V_N)}_K(q) = \frac{q^{\frac{N}{2}}-q^{-\frac{N}{2}}}{q^{\frac{1}{2}}-q^{-\frac{1}{2}}} P_{K}(q^{\frac{N}{2}},q^{\frac{1}{2}}-q^{-\frac{1}{2}}).
\end{equation}

The (framed) Kontsevich invariant $Z(K)$ is an invariant of framed knots that takes value in $\mathcal{C}$. The quantum $(\mathfrak{sl}_N,V_N)$ invariant is obtained from the Kontsevich invariant by a map $W_N=W_{(\mathfrak{sl}_N,V_N)}:\mathcal{C} \rightarrow \C[[h]]$, called the \emph{weight system} associated with $(\mathfrak{sl}_N,V_N)$, as
\begin{equation}
\label{eqn:fund01}
W_{N}(Z(K))= Q^{\mathfrak{sl}_N,V_N}_K(e^{h}).
\end{equation}
Combining (\ref{eqn:fund00}) and (\ref{eqn:fund01}), we get the following (cf. \cite[Theorem 2.3.1]{lm})
\begin{equation}
\label{eqn:fund1} W_{N}(Z(K)) = 
\frac{e^{\frac{Nh}{2}}-e^{-\frac{Nh}{2}}}{e^{\frac{h}{2}}-e^{-\frac{h}{2}}} P_{K}(e^{\frac{Nh}{2}},e^{\frac{h}{2}}-e^{-\frac{h}{2}}).
\end{equation}

We expand the right hand side of (\ref{eqn:fund1}) in a power series of $N$ and $h$ as
\begin{equation}
\label{eqn:fund02} W_N(Z(K))= \sum_{i,j} c_{i,j}(K) N^{i}h^{j}. 
\end{equation}
We also expand the $i$-th coefficient polynomial in a power series of $(Nh)$ by putting $a=e^{\frac{Nh}{2}}$, as
\[ \gamma_{K}^{i}(e^{\frac{Nh}{2}}) = \sum_{j=0}^{\infty} d_{i,j}(K)(Nh)^{j}.\]
Then we get an expansion 
\begin{align}
\nonumber
P_{K}(e^{\frac{Nh}{2}},e^{\frac{h}{2}}-e^{-\frac{h}{2}}) &= \sum_{i=0}^{\infty} \gamma^{i}_{K}(e^{\frac{Nh}{2}})(e^{\frac{h}{2}}-e^{-\frac{h}{2}})^{2i}\\
\nonumber & = \sum_{i=0}^{\infty}  \left(\sum_{j=0}^{\infty} d_{i,j}N^{j}h^{j}\right)  (h^{2i}+ (\deg h > 2i \mbox{ parts})) \\
\label{eqn:fund03}& = \sum_{j=0}^{\infty} N^{j}h^{j}( d_{0,j} + h(\mbox{power series on }h)) 
\end{align}

Since $\frac{e^{\frac{Nh}{2}}-e^{-\frac{Nh}{2}}}{e^{\frac{h}{2}}-e^{-\frac{h}{2}}}= N (1+ \mbox{power series on } N,h)$, by comparing (\ref{eqn:fund02}) and (\ref{eqn:fund03}) we get
\[
c_{i,j}(K)= \begin{cases}
 0 & i > j+1 \\
 d_{0,j}(K) & i=j+1
\end{cases}
\]
Thus, the zeroth coefficient polynomial $\gamma_{K}^{0}(a)$ is the diagonal part $c_{j+1,j}(K)$ of the HOMFLY polynomial, expanded as power series of $N$ and $h$ in (\ref{eqn:fund02}).

The same argument shows that the $i$-th coefficient polynomial $\gamma_{K}^{i}$ is determined by to the $i$-diagonal part $\{ c_{j+1,j}(K), c_{j+1,j+2}(K),\ldots, c_{j+1,j+2i}(K) \}_{j=0,\ldots}$ of the HOMFLY polynomial.

Let $\C[[N,h]]$ be the formal power series of $N$ and $h$ and 
$\Pi: \C[[N,h]] \rightarrow N\C[[hN]]$ be the projection.
We view $W_N$ as $W: \mathcal{C} \rightarrow \C[[N,h]]$ by viewing $N$ as variable and define
\[ W_{\gamma_0}=\Pi\circ W:\mathcal{C} \rightarrow N\C[[hN]]\]

Summarizing the argument so far, we conclude the following.
\begin{proposition}
\label{prop:fund}
For knots $K$ and $K'$, $\gamma_{K}^{0}(a)=\gamma_{K'}^{0}(a)$ if and only if
$W_{\gamma^0}(Z(K))=W_{\gamma^0}(Z(K'))$.
\end{proposition}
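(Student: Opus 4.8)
The plan is to unwind the definitions so that the proposition becomes a direct consequence of the diagonal identity $c_{j+1,j}(K) = d_{0,j}(K)$ established just before the statement. First I would compute $W_{\gamma^0}(Z(K))$ explicitly. By (\ref{eqn:fund02}) we have $W(Z(K)) = \sum_{i,j} c_{i,j}(K)N^i h^j$, and the subspace $N\C[[hN]] \subset \C[[N,h]]$ is spanned precisely by the monomials $N^{j+1}h^j$ with $j \geq 0$; hence the projection $\Pi$ keeps exactly the terms on this anti-diagonal, giving $W_{\gamma^0}(Z(K)) = \sum_{j \geq 0} c_{j+1,j}(K)\, N^{j+1}h^j$. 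Substituting $c_{j+1,j}(K) = d_{0,j}(K)$ and recalling the defining expansion $\gamma_K^0(e^{Nh/2}) = \sum_{j} d_{0,j}(K)(Nh)^j$, this collapses to the single clean identity $W_{\gamma^0}(Z(K)) = N\cdot\gamma_K^0(e^{Nh/2})$.

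With this identity the forward implication is trivial: if $\gamma_K^0(a) = \gamma_{K'}^0(a)$ as Laurent polynomials in $a$, then putting $a = e^{Nh/2}$ and multiplying by $N$ gives $W_{\gamma^0}(Z(K)) = W_{\gamma^0}(Z(K'))$. For the converse I would prove that the substitution $a \mapsto e^{Nh/2}$ is injective on Laurent polynomials. Cancelling the factor $N$, the hypothesis reads $\gamma_K^0(e^{Nh/2}) = \gamma_{K'}^0(e^{Nh/2})$ in $\C[[Nh]]$, so with $t = Nh$ and $f = \gamma_K^0 - \gamma_{K'}^0 = \sum_k f_k a^k$ (a finite sum over $k \in \Z$) we get $\sum_k f_k e^{kt/2} = 0$ as a formal power series in $t$. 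Comparing the coefficients of $t^0, t^1, t^2, \ldots$ produces a homogeneous linear system for the $f_k$ whose coefficient matrix is Vandermonde in the distinct scalars $k/2$; therefore all $f_k$ vanish and $\gamma_K^0 = \gamma_{K'}^0$.

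The whole argument is essentially bookkeeping with power-series expansions, so I do not expect a genuine obstacle; the only place demanding a little care — and the one I would spell out — is this injectivity of $a \mapsto e^{Nh/2}$, which is what guarantees that passing to the formal exponential in the definition of $W_{\gamma^0}$ does not conflate distinct zeroth coefficient polynomials. Everything else follows by matching the expansions already set up before the statement.
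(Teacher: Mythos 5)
Your proof is correct and follows essentially the same route as the paper, which simply presents the proposition as a summary of the preceding computation identifying $\gamma_K^0$ with the anti-diagonal coefficients $c_{j+1,j}(K)=d_{0,j}(K)$ of $W_N(Z(K))$. The only addition is that you make explicit the injectivity of the substitution $a\mapsto e^{Nh/2}$ on Laurent polynomials, a point the paper leaves implicit; this is a worthwhile clarification but not a different argument.
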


\begin{remark}
One may notice that the situation is similar to the Melvin-Morton-Rozansky  conjecture proven in \cite{bg}. We expand the colored Jones polynomials, quantum $\mathfrak{sl}_2$ invariant with respect to $N$-dimensional irreducible representation, as a power series of $h$ and $N$. Then the coefficients vanish under the diagonal part, and the diagonal part (so called the $1$-loop part) is equivalent to the Alexander-Conway polynomial. See \cite{bg} for details.
\end{remark} 

\subsection{Weight system associated with zeroth coefficient polynomial}
The weight system $W_N$ is described as state sum.
A \emph{state} of a chord diagram $D$ is a map $\sigma: [D] \rightarrow \{+1,-1\}$. For a state $\sigma$, we associate an oriented surface $S_\sigma$ as follows.
We start from the disk that bounds the outermost circle $S^{1}$ of the chord diagram. For a chord $c$ of $D$, if $\sigma(c)=+1$ we attach a 1-handle along $c$, and if $\sigma(c)=-1$ we do nothing (see Figure \ref{fig:state}).

\begin{figure}[htbp]
\begin{center}
\includegraphics*[width=115mm]{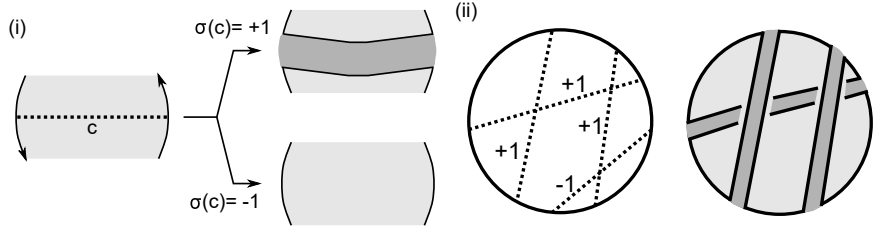}
   \caption{(i) Construction of surface $S_{\sigma}$. (ii) example.}
 \label{fig:state}
\end{center}
\end{figure}

Let $s(\sigma)$ be the number of boundary components of the resulting surface $S_{\sigma}$, and let $n(\sigma)$ be the number of chords such that $\sigma(c)=-1$. Then $W_{N}(D)$ is given by the following formula \cite{ba,cdm}.

\begin{equation}
\label{eqn:sl-weight} W_{N} (D) = \sum_{\sigma:[D] \rightarrow \{+1,-1\}}(-1)^{n(\sigma)}N^{s(\sigma)-n(\sigma)}h^{\deg D}
\end{equation}
Here $\sigma$ runs over all the states of $D$.

Let $\sigma_{+}$ be the state that assigns $+1$ for all the chords. We define the \emph{genus} $g(D)$ of $D$ as the genus of the surface $S_{\sigma_+}$. 
Then $s(\sigma_{+}) = \deg(D)+1-2g(D)$. 

For a state $\sigma$, changing the value of one chord from $+$ to $-$ can increase $s(\sigma)$ at most by one so we have $s(\sigma) \leq s(\sigma_+)+n(\sigma)$. Hence 
\[ s(\sigma)-n(\sigma) \leq s(\sigma_+) = \deg(D)-2g(D)+1. \]
Therefore $W_{N}(D)$ is of the form 
\[ W_N(D)= \sum_{0\leq j} x_{j} N^{\deg D+1-2g(D)-j}h^{\deg D} \qquad (x_j \in \Z).\]
This implies that, $W_{N}(D)$ contains a monomial of the form $N^{j}h^{j-1+2i}$ $(i,j\geq 0)$ only if $g(D) \leq i$. By Proposition \ref{prop:fund}, this means that the zeroth coefficient polynomial $\gamma^{0}_{K}(a)$ is determined by the genus $0$ chord diagram part of the Kontsevich invariant. 

Moreover, when $g(D)=0$ no two chords of $D$ intersect. Therefore in the case $g(D)=0$, for a state $\sigma$ we have $s(\sigma) \leq s(\sigma_+)$ and the equality occurs if and only if $\sigma=\sigma_{+}$. Thus if $g(D)=0$, $W_N(D)$ is of the form
\begin{equation}
\label{eqn:wsformula}
W_N(D)=N^{\deg D + 1}h^{\deg D} + \sum_{1\leq j} x_{j} N^{\deg D+1-j}h^{\deg D}. 
\end{equation}
These considerations show that the following simple description of $W_{\gamma^0}$. 
\begin{proposition}
\label{prop:weightformula}
For a chord diagram $D$,
\[ W_{\gamma^0}(D) =\begin{cases} N^{\deg D +1} h^{\deg D} & (g(D)=0)\\
 0 & (\mbox{otherwise})
\end{cases}
\]
\end{proposition}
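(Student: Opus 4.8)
The plan is to read the claim off directly from the structural description of $W_N(D)$ obtained above, applying the projection $\Pi$ monomial by monomial. Recall that $W_{\gamma^0}=\Pi\circ W$, and that $\Pi\colon\C[[N,h]]\to N\C[[hN]]$ is the projection onto the span of the ``diagonal'' monomials $N^{k+1}h^{k}$, $k\geq 0$, killing every other monomial $N^{a}h^{b}$. By the state sum \eqref{eqn:sl-weight}, every monomial occurring in $W_N(D)$ carries the factor $h^{\deg D}$; hence $\Pi(W_N(D))$ is nonzero precisely when $W_N(D)$ contains the monomial $N^{\deg D+1}h^{\deg D}$, and in that case $W_{\gamma^0}(D)$ equals that monomial times its coefficient in $W_N(D)$. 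So the whole argument reduces to computing the coefficient of $N^{\deg D+1}h^{\deg D}$ in $W_N(D)$.

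First I would dispose of the case $g(D)\geq 1$. The inequality $s(\sigma)-n(\sigma)\leq s(\sigma_+)=\deg D+1-2g(D)$ established above shows that the largest power of $N$ appearing in $W_N(D)$ is at most $\deg D+1-2g(D)\leq \deg D-1$, strictly smaller than $\deg D+1$. Therefore the coefficient of $N^{\deg D+1}h^{\deg D}$ vanishes and $W_{\gamma^0}(D)=0$, as required.

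For the case $g(D)=0$ I would invoke \eqref{eqn:wsformula}: since $S_{\sigma_+}$ is then planar, $\sigma_+$ is the unique state with $n(\sigma)=0$, and for every other state one has $s(\sigma)\leq s(\sigma_+)$ together with $n(\sigma)\geq 1$, so $s(\sigma)-n(\sigma)<s(\sigma_+)=\deg D+1$. Consequently the monomial $N^{\deg D+1}h^{\deg D}$ arises only from $\sigma_+$, with coefficient $(-1)^{0}=1$. Applying $\Pi$ then gives $W_{\gamma^0}(D)=N^{\deg D+1}h^{\deg D}$.

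The statement needs no new idea: all of the substance was already absorbed into the genus bound for the $N$-exponents and into the planarity analysis yielding \eqref{eqn:wsformula}. The only points requiring a little care are purely bookkeeping — confirming that $\Pi$ acts as described on $N\C[[hN]]$, and noting that since the $h$-exponent of every term of $W_N(D)$ is fixed at $\deg D$ while the maximal $N$-exponent is attained at the single state $\sigma_+$, no cancellation or coincidence can manufacture an unexpected diagonal contribution. As a sanity check one can verify the degenerate case $D=S^{1}$ ($\deg D=0$, $g(D)=0$): here $W_N(D)=N$, and the formula correctly returns $N^{1}h^{0}=N$.
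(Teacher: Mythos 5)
Your proposal is correct and follows essentially the same route as the paper: the paper derives Proposition~\ref{prop:weightformula} from the genus bound $s(\sigma)-n(\sigma)\leq \deg D+1-2g(D)$ (killing the diagonal monomial when $g(D)\geq 1$) together with the observation that for $g(D)=0$ only $\sigma_+$ attains the top $N$-exponent, yielding \eqref{eqn:wsformula}, and then applies $\Pi$. Your monomial-by-monomial bookkeeping of $\Pi$ and the explicit check that no cancellation can occur are just a slightly more careful write-up of the same argument.
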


\section{Kontsevich invariant of mutants and cables}

In this section we review a formula of the Kontsevich invariant of mutants and cables.

\subsection{Mutation of chord diagram and the Kontsevich invariant of mutants}

We say that a mutation is \emph{of type A} if it preserves the strands in the tangle setwise (see Figure \ref{fig:mutation2} (i)). Other two types of mutations are achieved by suitable compositions of mutation of type A (see Figure \ref{fig:mutation2} (ii)). So in the following we will concentrate our attention to the mutation of type A.

\begin{figure}[htbp]
\begin{center}
\includegraphics*[width=100mm]{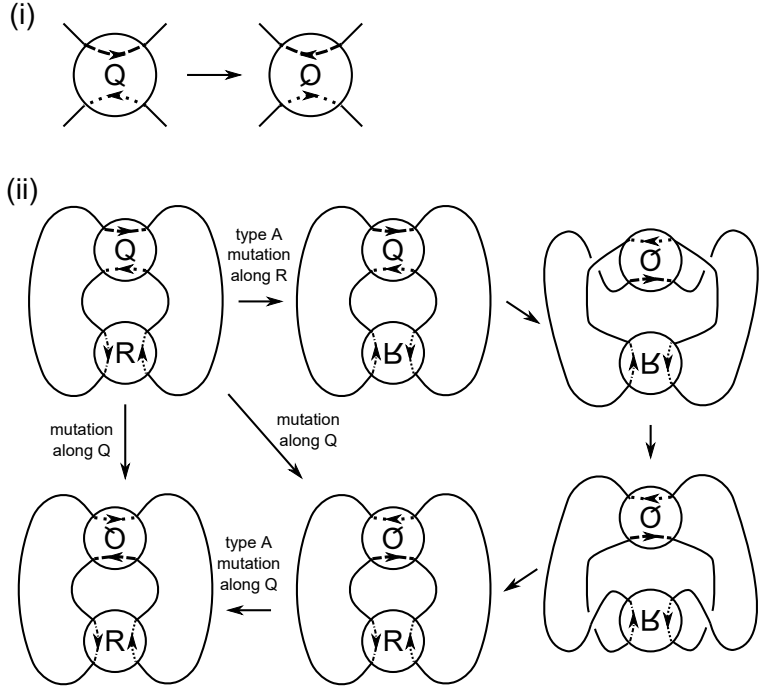}
   \caption{(i) mutation of type A. (ii) other types of mutations are achieved by mutation of type A}
 \label{fig:mutation2}
\end{center}
\end{figure}

A \emph{share} $S(I,J)$ in a chord diagram $D$ is a union of disjoint intervals $I$ and $J$ in $S^{1}$ and chords of $D$ both of whose legs lie on $I \cup J$. 
A \emph{mutation} of the chord diagram $D$ along a share $S$ is a chord diagram obtained by rotating or flyping (or both) a share $S$ (see Figure \ref{fig:share_mutation}). 
We say that a mutation of chord diagram is \emph{of type A} if it flips the orientation of $I \cup J$ but it does not swap the position of $I$ and $J$. This corresponds to the type A mutation of a knot.

\begin{figure}[htbp]
\begin{center}
\includegraphics*[width=80mm]{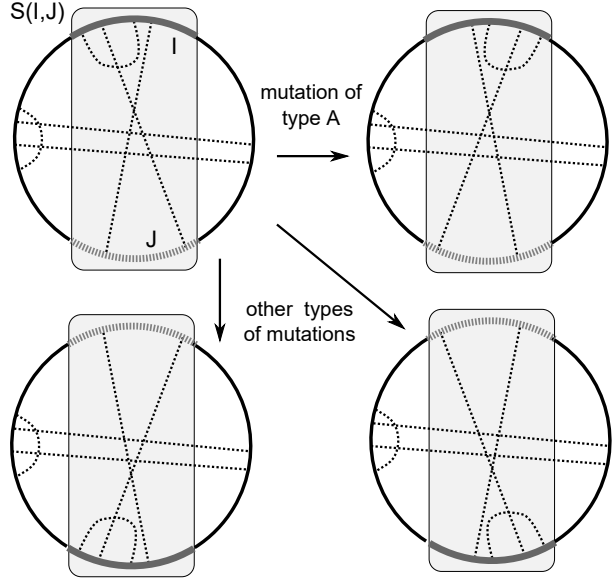}
   \caption{Share and mutation of chord diagrams}
 \label{fig:share_mutation}
\end{center}
\end{figure}

For mutants $K$ and $K'$, the Kontsevich invariant $Z(K')$ is obtained from $Z(K)$ by applying the mutation of each chord diagrams in $Z(K)$ along an appropriate share that corresponds to the mutation tangle $Q$ (see \cite{cdm} for precise statement). This gives the following criterion for mutation invariance of knot invariants.

\begin{proposition}
Let $V$ be a knot invariant that is governed by the Kontsevich invariant. That is, there is a map $W_V:\mathcal{C} \rightarrow \C$ such that $V(K)=W_V(Z(K))$.
Assume that if $D$ and $D'$ are related by mutation of chord diagrams, $W_{V}(D)=W_{V}(D')$. Then $V$ does not distinguish mutants.
\end{proposition}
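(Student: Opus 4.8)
The plan is to reduce the statement to the behavior of the Kontsevich invariant under mutation recalled just above, together with the linearity and continuity of $W_V$.

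First I would reduce to type A mutations: since every mutation is a composition of type A mutations (Figure~\ref{fig:mutation2}), it suffices to treat the case where $K'$ is obtained from $K$ by a single type A mutation along a tangle $Q$. By the discussion preceding the proposition (see \cite{cdm}), there is then a share $S$ --- determined combinatorially by $Q$ and, crucially, the same for every chord diagram that occurs --- such that $Z(K')$ is obtained from $Z(K)$ by replacing each chord diagram $D$ appearing in $Z(K)$ by its type A mutation $D_S$ along $S$. Writing $Z(K) = \sum_D z_D\, D$ with $z_D \in \C$, where the sum runs over chord diagrams and converges in the degree completion $\mathcal{C}$, we then have $Z(K') = \sum_D z_D\, D_S$ with the same coefficients.

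Next, since $W_V$ is a $\C$-linear map on $\mathcal{C}$ which (being the factor in $V = W_V \circ Z$) is continuous for the degree filtration, it commutes with these convergent sums, so that $V(K') = W_V(Z(K')) = \sum_D z_D\, W_V(D_S)$ while $V(K) = W_V(Z(K)) = \sum_D z_D\, W_V(D)$. Now I would invoke the hypothesis: $D$ and $D_S$ are related by a mutation of chord diagrams, hence $W_V(D) = W_V(D_S)$ for every $D$, so the two series agree term by term. Therefore $V(K) = V(K')$, which is exactly the assertion that $V$ does not distinguish mutants.

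The only nontrivial ingredient --- and the step I expect to be the main obstacle to spell out carefully --- is the structural statement from \cite{cdm} used above, that the Kontsevich invariant of a mutant is obtained by mutating every chord diagram along a single share coming from the mutating tangle. The delicate points there are that the share can be taken combinatorially the same across all diagrams (this follows from $Z$ being assembled from the tangle $Q$ and its complement), that the orientation convention in a type A mutation of chord diagrams matches the type A mutation of knots (flipping $I \cup J$ without swapping $I$ and $J$), and that the framing correction relating the framed Kontsevich invariant to the $0$-framed one is unaffected by mutation. Granting that input, the remainder is the one-line linearity computation above.
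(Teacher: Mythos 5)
Your proposal is correct and follows essentially the same route as the paper, which states no explicit proof and treats the proposition as an immediate consequence of the cited fact (from \cite{cdm}) that $Z(K')$ is obtained from $Z(K)$ by mutating each chord diagram along the share corresponding to the mutating tangle, combined with linearity of $W_V$. Your additional remarks on continuity for the degree completion and on the framing being unaffected by mutation are reasonable elaborations of points the paper leaves implicit.
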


Actually, more is true; two chord diagram have the same intersection graph if and only if they are related by mutations. Moreover, a knot invariant $V$ does not distinguish mutant if and only if $W_V$ only depends on the intersection graph \cite{cl}.

\subsection{Kontesvich invariant of cables}

The effect of the cabling  for the Kontsevich invariant is described as follows \cite{blt,lm2}.

For a pair of integers $(p,q)$, let $l=l_{p,q}$ be a simple closed curve on a torus $T=S^{1} \times S^{1}$ that represents $p[S^{1}\times \{\ast \}] + q[\{\ast\} \times S^{1}] \in H_{1}(T;\Z)$. Let $\pi=S^{1}\times S^{1} \rightarrow S^{1}$ be the projection map to the first factor. The restriction of $\pi$ to $l$ gives a $p$-fold covering map $\pi=\pi_{p,q}:l=\widetilde{S^{1}} \rightarrow S^{1}$. 
In the following, to distinguish the base circle $S^{1}$ and its $p$-fold covering, we denote the covering circle by $\widetilde{S^{1}}$.

For a chord diagram $D$ on $S^{1}$, we say that a chord diagram $\widetilde{D}$ on $\widetilde{S}^{1}$ is a \emph{lift} of $D$ if $\pi(\widetilde{D})=D$.
That is, for $D=\{\{x_1,y_1\},\ldots,\{x_n,y_n\}\}$, $\widetilde{D}$ is a lift of $D$ if and only if $\widetilde{D}=\{\{\widetilde{x_1},\widetilde{y_1}\},\ldots,\{\widetilde{x_n},\widetilde{y_n}\}\}$ where $\widetilde{x_i} \in \pi^{-1}(x_i)$ and $\widetilde{y_i} \in \pi^{-1}(y_i)$ for each $i$.

We take a base point $\ast \in S^{1}$ so that $\ast$ is different from all the legs of chords of $D$. Then $\pi^{-1}(S^{1} \setminus \ast)$ is a union of disjoint intervals $L_1,\ldots,L_p$. For $x \in S^{1} \setminus \ast$, we denote the lift of the point $x$ which lies on $L_i$ by $x_i$. 

We call a map $s:\{ \mbox{Set of legs of chords of } D\} \rightarrow \{1,\ldots,p\}$ a \emph{leg $p$-coloring}. There is a one-to-one correspondence between the set of all the leg $p$-colorings and the set of all the lifts $\widetilde{D}$ of $D$: for a leg $p$-coloring $s$, we assign the lift $D_{s}$ so that the chord $\{x,y\}$ is lifted to the chord $\{x_{s(x)},y_{s(y)}\}$ (see Figure \ref{fig:lift}).

\begin{figure}[htbp]
\begin{center}
\includegraphics*[width=100mm]{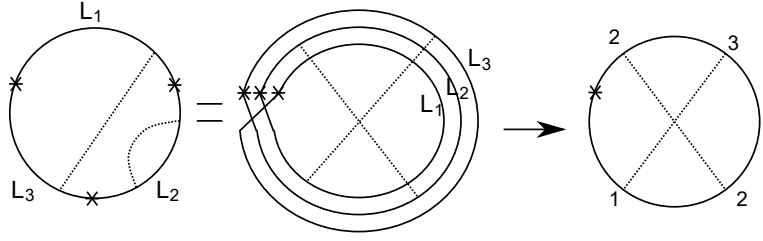}
   \caption{Lift of chord diagrams and leg $p$-colorings}
 \label{fig:lift}
\end{center}
\end{figure}

We define the map $\psi^{(p|q)}: (\mathcal{C}(S^{1}) =) \mathcal{C} \rightarrow \mathcal{C} (=\mathcal{C}(\widetilde{S^{1}}))$ by
\[ \psi^{(p|q)}(D)= \sum_{s} \widetilde{D}_s \]
where $s$ runs all the leg $p$-coloring (that is, $\psi^{(p|q)}(D)$ is the sum of all the lifts of $D$).
Using the map $\psi^{(p|q)}$, the cabling formula of Kontsevich invariant is written as follows \cite[Theorem 1, Remark 3.4]{blt}:
\begin{equation}
\label{eqn:cabling}
 Z(K_{(p,q)}) = \left[ \psi^{(p|q)}\left( Z(K) \# \exp(\frac{q}{2p}
\strutd )\right) \right] \# \exp(-\frac{q}{2} 
\strutd)
\end{equation}

Here $\exp(D)=1+D+\frac{1}{2}D\# D + \cdots = \sum_{n=0}^{\infty}\frac{1}{n!} \underbrace{D\# \cdots \# D}_{n}$.

If $D$ is $p$-colorable, then there exists a 1-colorable lift $\widetilde{D}$ of $D$: For a $p$-coloring of $D$, we take a leg $p$-coloring $s$ of $D$ given by $s(x_i)=s(y_i)=c$ if the chord $\{x_i,y_i\}$ is colored by $c$. Then its corresponding lift $\widetilde{D}_s$ is $1$-colorable.
It is interesting to ask whether the converse is true or not:

\begin{question}
\label{ques:chord}
Is a lift $\widetilde{D}$ of $D$ admits a $1$-coloring, if and only if $D$ is $p$-colorable ?
\end{question}

Since $g(D)=0$ if and only if $D$ is 1-colorable, it is the $1$-colorable chord diagrams that contributes to the zeroth coefficient polynomial.

An affirmative answer to Question \ref{ques:chord} identifies the part of the Kontsevich invariant reflected to the cable of the zeroth coefficient polynomial.
The zeroth coefficient polynomial of the $(p,q)$-cable is determined by the $i$-colorable ($i\leq p$) chord diagram part of the Kontsevich invariant. 

\section{Proof of invariance}

For coprime $p,q$, let $\pi_{p,q}:\widetilde{S^{1}}\rightarrow S^{1}$ be the $p$-fold covering taken in the previous section.
We denote by $\Lift(D)$ be the set of $1$-colorable lift of the chord diagram $D$ to $\widetilde{S^{1}}$. 

Let $D^{\tau}$ be the chord diagram that is obtained from $D$ by mutation of type A along a share $S=S(I,J)$. The mutation invariance of cables of zeroth coefficient polynomial follows from the following assertion.

\begin{proposition}
\label{proposition:key}
There is a one-to-one correspondence between $\Lift(D)$ and $\Lift(D^{\tau})$. 
\end{proposition}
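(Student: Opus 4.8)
The plan is to reduce to a type-A mutation (as permitted by Section 3) and then to argue one leg-coloring at a time, using that a lift of a chord diagram is $1$-colorable exactly when it has no crossings (genus $0$). Write $D^{\tau}$ for $D$ with the orientation of $I\cup J$ reversed in place along the share $S(I,J)$, the chords with both legs outside $I\cup J$ (the \emph{outer} chords) being unchanged. I would first fix the geometry of the cover: take the covering basepoint in the component of $S^{1}\setminus(I\cup J)$ that is \emph{not} between $I$ and $J$, calling that component $F$ and the other one $E$. Then $\widetilde{S^{1}}$ reads cyclically $I_{1}E_{1}J_{1}F_{1}\cdots I_{p}E_{p}J_{p}F_{p}$, a lift of $D$ is recorded by a leg $p$-coloring, the inner chords lift into $\bigcup_{i}(I_{i}\cup J_{i})$ while the outer chords lift into $\bigcup_{i}(E_{i}\cup F_{i})$, and the mutation affects only the inner part.

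The first and cleanest case is that in which every chord of $D$ is inner. After an orientation-preserving reparametrisation of $S^{1}$ that makes $I$ and $J$ antipodal arcs of equal length, reversing $I\cup J$ in place is realised by the reflection $\rho$ of $S^{1}$ through the two midpoints: $\rho$ also interchanges the two complementary arcs, but those carry no legs, so in fact $\rho(D)=D^{\tau}$. Being orientation-reversing, $\rho$ acts by $-1$ on $\pi_{1}(S^{1})$, hence preserves the $p$-fold cyclic cover and lifts to an orientation-reversing homeomorphism $\widetilde{\rho}$ of $\widetilde{S^{1}}$. Since $\widetilde{\rho}$ covers $\rho$ it maps lifts of $D$ bijectively to lifts of $D^{\tau}$, and being a self-homeomorphism of the circle it carries non-crossing diagrams to non-crossing ones; restricting to $1$-colorable lifts gives the bijection $\Lift(D)\leftrightarrow\Lift(D^{\tau})$ in this case.

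For general $D$ I would aim to reduce to the previous case by splitting the condition ``$\widetilde{D}_{s}$ is non-crossing'' into: (i) the inner part is non-crossing; (ii) the outer part is non-crossing; (iii) no inner chord crosses an outer chord. Condition (ii) is untouched by the mutation, and (i) is controlled by the inner case applied to the inner part of $D$, which produces a bijection of the admissible inner leg-colorings realised by a fixed reflection of the sheet indices. Everything then turns on (iii): whether a given inner chord crosses a given outer chord depends only on which of the arcs $I_{i},J_{i}$ carry the legs of the inner chord, since such a chord separates no outer arc; so one only needs to follow how the sheet-reflection permutes these arc-memberships and to check, via the explicit criterion for a crossing in the cover, that (iii) is preserved, after relabelling the outer legs' sheets by the matching deck permutation (which does not disturb (ii)).

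This last step is the main obstacle I anticipate. The full mutation $D\mapsto D^{\tau}$ is \emph{not} induced by any homeomorphism of $S^{1}$---the outer chords running from $E$ to $F$ obstruct this---which is precisely why the homeomorphism argument of the inner case does not extend verbatim, and the bijection in the general case must be put together by hand: one combines the sheet-reflection on the inner legs with a compatible permutation of the outer legs' sheets and verifies directly that the inner--inner, outer--outer, and inner--outer crossing conditions all match between $D$ and $D^{\tau}$. The delicate point will be the inner--outer bookkeeping: keeping the cyclic interval of sheets ``cut off'' by an inner chord aligned under the reflection.
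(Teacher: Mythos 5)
Your treatment of the special case in which every chord has both legs on $I\cup J$ is correct and pleasant: the type-A mutation is then realized by a reflection of $S^{1}$, which lifts to the $p$-fold cover because $-1$ preserves the subgroup $p\Z\subset\pi_1(S^1)$, and a homeomorphism of $\widetilde{S^{1}}$ manifestly preserves $1$-colorability. But the content of the proposition is the general case, and there your argument stops exactly where the work begins: you reduce everything to condition (iii) (no lifted inner chord crosses a lifted outer chord) and then defer its verification, proposing to handle it by composing the sheet-reflection on the inner legs with a single ``matching deck permutation'' of the outer legs' sheets. That mechanism cannot work, so this is a genuine gap rather than a routine omission. Take $p=2$ and let $D$ consist of one inner chord $u$ joining $I$ to $J$ and one outer chord $v$ joining $E$ to $F$ (the paper's $X$ and $Y$), so that the lifted arcs read $I_1E_1J_1F_1I_2E_2J_2F_2$. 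Writing a lift of $v$ as the pair $(c,d)$ with legs on $E_c,F_d$, one checks that the $1$-colorable lifts with $u$ lifted to $I_1$--$J_1$ are exactly those with $(c,d)\in\{(2,1),(2,2)\}$, while those with $u$ lifted to $I_1$--$J_2$ are exactly those with $(c,d)\in\{(1,1),(2,1)\}$, and those with $u$ lifted to $I_2$--$J_1$ are those with $(c,d)\in\{(1,2),(2,2)\}$. Either lift of your reflection sends $I_1$--$J_1$ to $I_1$--$J_2$ or to $I_2$--$J_1$, so you would need a map of the form $(c,d)\mapsto(\alpha(c),\beta(d))$, with $\alpha,\beta$ permutations of $\{1,2\}$, carrying $\{(2,1),(2,2)\}$ into a set on which $d$ is constant; no such map exists. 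Hence the new sheet of a leg of $v$ must depend on more than its old sheet: it must depend on which ``gap'' of the inner configuration $v$ occupies.

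This is precisely what the paper's proof supplies and what is missing from your outline. The paper decomposes a $1$-colorable lift $\widetilde{D}$ into equivalence classes $R_1,\dots,R_k$ of lifted share-intervals generated by the relation ``joined by a chord,'' flips each class separately by reversing its associated word over $\{I,J\}$, and re-glues the complementary sub-chord diagrams (which carry the outer chords) according to an explicit matching of the ``gap words'' $\mathcal{W}_{I\to I},\mathcal{W}_{I\to J},\mathcal{W}_{J\to I},\mathcal{W}_{J\to J}$; the displacement of an outer chord thus varies from gap to gap, which is exactly the freedom your uniform relabelling lacks. (The paper also sidesteps constructing a single bijection by producing injections in both directions.) To salvage your approach you would have to replace ``a fixed reflection of the sheet indices plus a deck permutation'' by a block-dependent reassignment of this kind, at which point you would essentially be reproving the paper's argument.
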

\begin{proof}
We show an existence of one-to-one correspondence by constructing injections $\Psi:\Lift(D) \rightarrow \Lift(D^{\tau})$ and $\Phi:\Lift(D^{\tau}) \rightarrow \Lift(D)$.
 To make notation simpler in the following we consider the case $q=1$. A general case is similar.

Let $X,Y$ be the connected components of $S^{1} \setminus (I \cup J)$. We take a basepoint $\ast \in S^{1}$ as one of the boundary of $I$ so that four intervals $I,X,J,Y$ appear in this order along $S^{1}$. 
We denote the $i$-th lift of the interval $L \in \{I,J,X,Y\}$ by $L_{i}$. 
We give the total ordering $<$ on the set $\{I_i,J_i,X_i,Y_i\}$ according to the orientation of $\widetilde{S^{1}}$. Namely, $<$ is defined by
\[ I_1 < X_1 < J_1 < Y_1 < I_2 < \cdots < I_p < X_p < J_p < Y_p.\]

Take a $1$-colorable lift $\widetilde{D} \in \Lift(D)$. For $A,B \in \{I_1,\ldots, I_p,J_1,\ldots,J_p\}$ we define $A \sim B$ if there is a chord connecting $A$ and $B$. By abuse of notation, we use the same symbol $\sim$ to express the equivalence relation generated by $\sim$. We decompose the set of lifted intervals into the disjoint union of equivalence classes
\[ \{I_1,\ldots, I_p,J_1,\ldots,J_p\} = R_1\sqcup R_2 \sqcup \cdots \sqcup R_k.\]
By the same manner, we decompose 
\[ \{X_1,\ldots, X_p,Y_1,\ldots,Y_p\} = S_1\sqcup S_2 \sqcup \cdots \sqcup S_l\] 
W choose the numbering of equivalence class so that
\[ \min_< \{L \in R_1\} < \min_< \{L \in R_2\} < \cdots \]
holds.

We view each equivalence class $R_i$ (or $S_i$) as a sub-chord diagram of $\widetilde{D}$, and we view $\widetilde{D}$ as a union of sub-chord diagrams $R_1,\ldots,R_k,S_1,\ldots,S_l$ (See Figure \ref{fig:flypeeg0}). 

\begin{figure}[htbp]
\begin{center}
\includegraphics*[width=100mm]{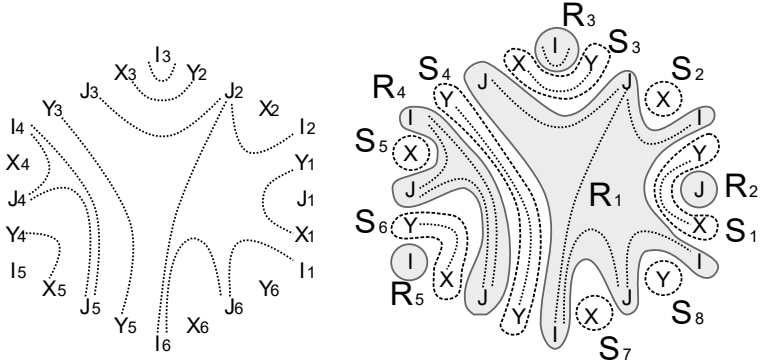}
   \caption{Lift of chord diagram $\widetilde{D}$ and factorization into sub-chord diagrams.}
 \label{fig:flypeeg0}
\end{center}
\end{figure}

We align an element of $R_i$ according to the order $<$. Let $W(R_i)$ be the word over $\{I,J\}$ obtained by reading the letter $\{I,J\}$ of $R_i$. For example, if $R_i=\{I_1,I_2,J_4,J_6,I_8,J_8\}$ then $W(R_i)=IIJJIJ$.

Then we construct a new chord diagram $f_1(\widetilde{D})$, which we call  \emph{flipping $R_1$ of $\widetilde{D}$} as follows (see Example \ref{exam:flip} below for detailed illustration).

Let $W=W(R_1)$. We construct the following four ordered sets of words $\mathcal{W}_{I\to I}$, $\mathcal{W}_{I\to J}$, $\mathcal{W}_{J\to I}$, and $\mathcal{W}_{J\to J}$ as follows.
For $s=1,\ldots,\#R_1(=\mbox{length of } W$, we look at the $s$-th and $(s+1)$-st letter of $W$. Here we regard $W$ as a cyclic word so when $s=\#R_1$ we look at the last and the first letter of $W$.
There are four possibilities; $(I,I)$, $(I,J)$, $(J,I)$ and $(J,J)$.
Assume that the $s$-th letter is $I=I_a$ and $(s+1)$st letter is $J=J_b$.
Then we add the word $X_{a}J_{a}Y_{a}I_{a+1}X_{a+1}\cdots I_{b}X_{b} = XJYI\cdots IX$ to the set $\mathcal{W}_{I\to J}$. The other cases are similar.

Let $\overleftarrow{W}$ be the reverse of $W$, the word obtained from $W$ by reading it in the backward direction. For example, if $W=IIJJIJ$, $\overleftarrow{W}=JIJJII$. 
Then we construct $\overleftarrow{R_1} \subset \{I_*,J_*,X_*,Y_*\: | \: *=1,\ldots,p\}$ so that the corresponding word $W(\overleftarrow{R_1})$ is $\overleftarrow{W}$.

As the first step we put $\overleftarrow{R_1}=\emptyset$. 
For $i=1,\ldots,\#R_1=\mbox{length of } \overleftarrow{W}$, we add $I_{*}$ or $J_{*}$ to $\overleftarrow{R_1}$ according to the following inductive way. 

For $i=1$ we add $I_1$ or $J_1$ to $\overleftarrow{R_1}$ according to the first letter of $\overleftarrow{W}$; If the first letter of $\overleftarrow{W}$ is $I$ (resp. $J$) then we add $I_1$ (resp. $J_1$). 

Assume that the $i$-th letter is $I$ and $(i+1)$st letter is $J$, and that we added $I_{a}$ to the set $\overleftarrow{R_1}$ in the previous step. 
Then we add $J_b$ to $\overleftarrow{W}$ so that the `gap', the word between $I_a$ and $J_b$
\[ X_{a}J_{a}Y_a\cdots I_b X_b = XJY \cdots IX  \]
coincides with the first element of $\mathcal{W}_{I\to J}$. Then we remove this word $XJY \cdots YJ$ from the set $\mathcal{W}(I\to J)$. The other cases are treated similarly.

Using $\overleftarrow{W}$, $\mathcal{W}_{I\to I}$, $\mathcal{W}_{I\to J}$, $\mathcal{W}_{J\to I}$, and $\mathcal{W}_{J\to J}$ we construct a new chord diagram $f_1(\widetilde{D})$ by cut-and-paste.

We view the chord diagram $\widetilde{D}$ as a union of a sub-chord diagram $R_1$ and its complements $C_1,\ldots,C_n$. Then there is a one-to-one correspondence between 
\[ \{C_1,\ldots,C_n\} \mbox{ and } \mathcal{W}_{I\to I} \cup \mathcal{W}_{I\to J} \cup \mathcal{W}_{J\to I} \cup \mathcal{W}_{J\to J}.\]

We flip the sub-chord diagram $R_1$ to reverse the orientation of the boundary intervals in $R_1$. Then we glue the flipped sub-chord diagram back to $\widetilde{S^{1}}$ so that its $i$-th interval is glued to the $(\# R_1 -i)$-th interval of $\overleftarrow{R_1}$. 

Then we glue back the remaining sub-chord diagrams $C_1,\ldots,C_n$ of $R_1$ to the complementary regions, using the one-to-one correspondence of two sets
\[ \{C_1,\ldots,C_n\} \mbox{ and } \mathcal{W}_{I\to I} \cup \mathcal{W}_{I\to J} \cup \mathcal{W}_{J\to I} \cup \mathcal{W}_{J\to J}.\] 
We denote the resulting chord diagram by $f_1(\widetilde{D})$.

The chord diagram $f_{i}(\widetilde{D})$, \emph{flipping $R_i$ of $\widetilde{R}$} are defined similarly. By construction, this operation is injective in the sense
\begin{equation}
\label{eqn:inj}
f_i(\widetilde{D})=f_i(\widetilde{D'}) \mbox{ if and only if } \widetilde{D}=\widetilde{D'}.
\end{equation}

Let $\Psi(\widetilde{D})=f_{k}\circ \cdots \circ f_2 \circ f_1(\widetilde{D})$. 
Since $\Psi$ flips all the sub-chord diagrams $R_i$ of $\widetilde{D}$ preserving the orientation of other chord diagrams $S_i$, $\Psi(\widetilde{D}) \in \Lift(D^{\tau})$. By the property (\ref{eqn:inj}), $\Psi: \Lift(D) \rightarrow \Lift(D^{\tau})$ is injective. By interchange the role of $D$ and $D^\tau$, we also have an injective map $\Phi: \Lift(D^{\tau}) \rightarrow \Lift(D)$. This shows that two sets $\Lift(D)$ and $\Lift(D^{\tau})$ have the same cardinal.
\end{proof}

\begin{example}[Flipping the sub-chord diagram $R_1$ of $\widetilde{R}$]
\label{exam:flip}
Here we give an example of flipping the sub-chord diagram $R_1$ of $\widetilde{R}$. See Figure \ref{fig:flype-exam1}. Assume that $R_1=\{I_1,I_3,J_3,J_6,I_8,J_8\}$. 

By definition, $W=W(R_1)=IIJJIJ$. The 1st letter of $W$ is $I$, the 2nd letter of $W$ is $I$. The word obtained by reading intervals between $I_1$ and $I_3$ is $X_1J_1Y_1I_2X_2J_2Y_2$ so we add the word $XJYIXJY$ to the set $\mathcal{W}_{I\to I}$. Similarly, the 3rd letter is $J$ so we add the word $X$ to $\mathcal{W}_{I\to J}$. Iterating this procedure, we get 
\[ \begin{cases} \mathcal{W}_{I\to I}=(XJYIXJY)\\
\mathcal{W}_{I\to J}=(X,XJYIX)\\ 
\mathcal{W}_{J\to I}=(YIXJY,YIXJY)\\ 
\mathcal{W}_{J\to J}=(YIX).
\end{cases}
\]

Then the construction of $\overleftarrow{R_1}$ goes as follows. By definition, $\overleftarrow{W}=JIJJII$. We start from the emptyset. The first letter of $\overleftarrow{W}$ is $J$ so we add $J_1$ to $\overleftarrow{R_1}$ (so now $\overleftarrow{R_1}=\{J_1\}$). The second letter of $\overleftarrow{W}$ is $I$ so we look at the set $\mathcal{W}_{J\to I}=(YIXJY,YIXJY)$. Its first element is $YIXJY$. So we add $I_3$ to $\overleftarrow{R_1}$ since the `gap' between $J_1$ and $I_3$ is $Y_1I_2X_2J_2Y_2=YIXJY$ (so now $\overleftarrow{R_1}=\{J_1,I_3\}$), and we remove the word $YIXJY$ from $\mathcal{W}_{J\to I}$ (so now $\mathcal{W}_{J\to I}=(YIXJY)$).
Similarly, the 3rd letter is $J$ (and the 2nd letter is $I$) so we look at the set $\mathcal{W}_{I\to J} =(X,XJYIX)$. Its first element is $X$. We add $J_3$ to $\overleftarrow{R_1}$ (so now $\overleftarrow{R_1}=\{J_1,I_3,J_3\}$) and remove the word $X$ from $\mathcal{W}_{I\to J}$ (so now $\mathcal{W}_{I\to J} =(XJYIX)$).
Iterating this procedure we get 
\[ \overleftarrow{R_1}=\{J_1,I_3,J_3,J_4,I_6,I_8\}.\]

Then we construct the chord diagram $f_1(\widetilde{D})$. We flip the sub-chord diagram $R_1$ and glue it back. We glue the boundary intervals $(I_1,I_3,J_3,J_6,I_8,J_8)$ of $R_1$ to $(I_8,I_6,J_4,J_3,I_3,J_1)$ respectively with reversing the orientations.

We glue back the complementary sub-chord diagrams $C_1,\ldots,C_6$. The boundary intervals of the sub-chord diagram $C_1$ is $X_1J_1Y_1I_2X_2J_2Y_2$, which corresponds to the first element $XJYIXJY$ in $\mathcal{W}_{I\to I}$. In the construction of $\overleftarrow{R_1}$ we used this word $XJYIXJY$ as a gap between $I_6$ and $I_8$. So we glue $C_1$ so that the intervals $X_1,J_1,\ldots,J_2,Y_2$ are identified with $X_6,J_6,\ldots,J_7,Y_7$ (preserving the orientation). Other sub-chord diagram are glued similarly. For example, $C_6$ corresponds to the second element $YIXJY$ of $\mathcal{W}_{J\to I}$, which we used as a gap between $J_4$ and $I_6$. Hence $C_6$ is glued back so that intervals $Y_7,I_8,X_8,J_8,Y_8$ are identified with the intervals $Y_4,I_5,X_5,J_5,Y_5$, respectively. \\

\begin{figure}[htbp]
\begin{center}
\includegraphics*[width=100mm]{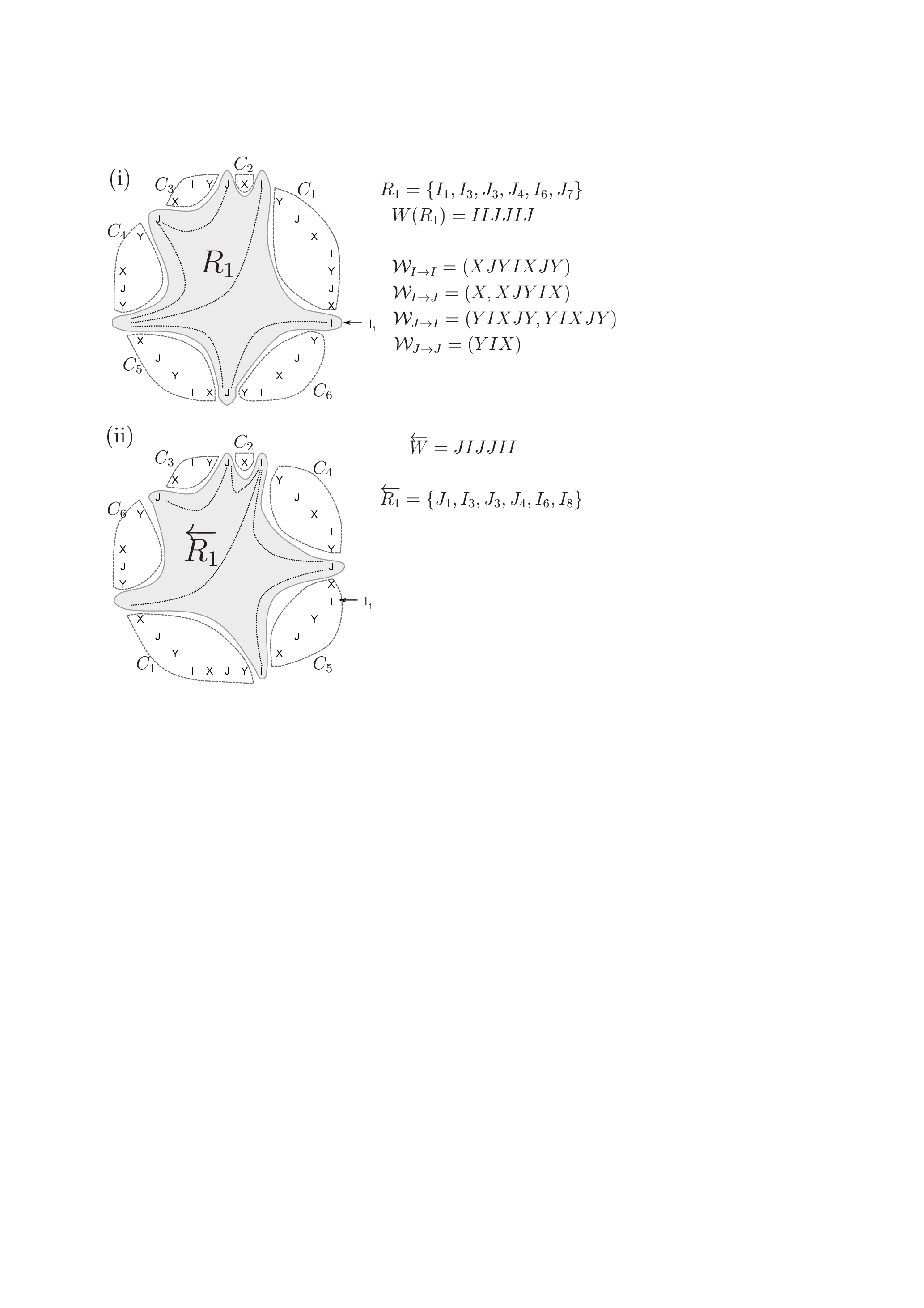}
   \caption{Flipping sub-chord diagram $R_1$: (i) describes $\widetilde{D}$ and (ii) describes $f_1(\widetilde{D})$ (we omit to describe chords in $C_1,\ldots,C_6$).}
 \label{fig:flype-exam1}
\end{center}
\end{figure}
\end{example}

\begin{proof}[Proof of Theorem \ref{theorem:main}]
Let $K$ and $K'$ be mutants. With no loss of generality we may assume that $K$ and $K'$ are related by mutation of type A. 

For a chord diagram $D$ let $D^{\tau}$ be the corresponding mutation. 
By Proposition \ref{prop:weightformula}, for a chord diagram $D$ we have
\[ W_{\gamma^0}(\psi^{(p|q)}(D))= \# \Lift(D) N^{\deg D +1}h^{\deg D} \]
By Proposition \ref{proposition:key}, $\# \Lift(D) =\# \Lift(D^\tau)$ hence
\[  W_{\gamma^0}(\psi^{(p|q)}(D)) =  W_{\gamma^0}(\psi^{(p|q)}(D^{\tau}))\]
By the cabling formula (\ref{eqn:cabling}) this implies $W_{\gamma^0}(Z(K_{p,q}))=W_{\gamma^0}(Z(K'_{p,q}))$ hence Proposition \ref{prop:fund} shows $\gamma_{K_{p,q}}^0(a)=\gamma_{K'_{p,q}}^0(a)$ as desired.
\end{proof}

\end{document}